\theoremstyle{plain}
\newtheorem{thm}{Theorem}[section]
\newtheorem{lem}[thm]{Lemma}
\newtheorem*{mthm}{Main Theorem}
\theoremstyle{definition}
\newtheorem{defn}[thm]{Definition}
\newtheorem{rem}{Remark}[section]
\newtheorem{exm}{Example}[section]
\numberwithin{equation}{section}
\begin{document}

\title{Devaney's chaos revisited\footnote{Supported in
part by the National Natural Science Foundation of P.\ R.\ China
(11071263)}}

\author{Xiaoyi Wang and Yu Huang\footnote{Corresponding author: Y. Huang, E-mail:
stshyu@mail.sysu.edu.cn}}

\date{}

\maketitle

\begin{center}
Department of Mathematics, Sun Yat-sen University, Guangzhou 510275,
P.\ R.\ China
\end{center}

\baselineskip 16.2pt

\begin{abstract}
Let $X$ be a metric space, and let $f\colon X\rightarrow X$ be a
continuous transformation. In this note, a concept
\textit{indecomposability} of $f$ is introduced. We show that
transitivity implies indecomposability and that Devaney's chaos is
equivalent to indecomposability together with density of periodic
points. Moreover, we point out that the indecomposability and the
periodic-points density are independent of each other even for
interval maps (i.e., neither implies the other).

\vskip 0.1cm \noindent {\bf AMS} classification: 54H20; 37D45

\vskip 0.1cm \noindent {\bf Keywords:}   Transitivity;
Indecomposability; Devaney's chaos

\end{abstract}

\section{Introduction}\label{sec1}
Throughout this note, we let $f\colon X\rightarrow X$ be a continuous transformation of a metric space $X$.
Devaney~\cite{de} called it to be \textit{chaotic} if it satisfies the following three conditions:
\begin{itemize}
\item[(i)] $f$ is transitive.
\item[(ii)] the  periodic points of $f$ are dense in $X$.
\item[(iii)] $f$ has sensitive dependence on initial conditions.
\end{itemize}
As the concepts entropy and Li-Yorke's chaos, Devaney's chaos is an
important tool to discover the complexity of the dynamical system
$(X,f)$. These three concepts have intrinsic relations each
other~\cite{AC,BG,HY}.

It is well known that in Devaney's chaos, conditions (i), (ii) and (iii) are not
independent of each other; see, for examples, Banks et al.~\cite{ba},
Assaf IV and Gadbois~\cite{as},  Vellekoop and Berglund~\cite{ve},
Crannell~\cite{ca}, and Touhey~\cite{to}.

In this note, we further study Devaney's chaos by introducing the
concept---\textit{indecomposability}. Two $f$-invariant closed
subsets $A, B$ are \textit{independent} if they have no common
interior points; that is,
$\mathrm{Int}(A)\cap\mathrm{Int}(B)=\varnothing$. Now $(X,f)$ is
called \textit{indecomposable} if any two $f$-invariant closed
subsets having nonempty interiors are not independent.

Since transitivity is equivalent to the fact that the only
$f$-invariant closed subset of $X$ having nonempty interior is $X$
itself, transitivity implies indecomposability. The converse however
is not true even in the case of $X=[0,1]$, as shown by
Example~\ref{ex3.1} below.

Our main result proved in this note can be stated as follows:

\begin{mthm}
For the topological dynamical system $(X,f)$, the following two
statements are equivalent to each other:
\begin{enumerate}
\item[$(1)$] $f$ is Devaney chaotic.

\item[$(2)$] $f$ is indecomposable and the periodic points are dense in $X$.
\end{enumerate}
\end{mthm}

We will prove this in Section~\ref{sec2}. An interesting point is that
the indecomposability and the periodic-points density are independent of
each other even for interval maps (i.e., neither implies the other), as shown by Example~\ref{ex3.2} below.

\section{Several equivalent definitions for Devaney's chaos}\label{sec2}

Let $(X,f)$ be a topological dynamical system as in Section~\ref{sec1}.
To avoid the trivial case, we assume that $X$ has at least infinitely many elements.

We denote respectively the recurrent points set, orbit and $\omega$-limit points set of $f$ by
\begin{gather*}
R(f)=\left\{x\in X\,|\,\exists n_k\uparrow\infty\textrm{ s.t. }f^{n_k}(x)\to x\right\},\\
\mathrm{Orb}_f(x)=\left\{f^n(x)|n\geq 0\right\},\\
\omega(x)=\bigcap\limits_{n\geq 0}\overline{\{f^k(x)|k\geq n\}}.
\end{gather*}
Given a subset $A\subset X$, we denote the interior of $A$ by
$\mathrm{Int}(A)$ and the closure of $A$ by $\overline{A}$ in $X$.
$A$ is invariant if $f(A)\subset A$. A subset $S$ of $X$ is residual
if it contains a dense $G_\delta$ set. A Baire space is a
topological space such that every nonempty open subset is of second
category.

Recall that $f$ is transitive if for any two nonempty open subsets $U$ and $V$ in $X$ there exists $n\in \mathbb{Z}_{+}$
such that $f^n(U)\cap V\neq\varnothing$. A point $x\in X$ is called a transitive point if
$\overline{\mathrm{Orb}_f(x)}=X$. By $\mathrm{Tr}_f$ we mean the set of all transitive points of
$f$. It is well known that $\mathrm{Tr}_f$ is a dense $G_\delta$ set if $X$
is a Baire separable metric space.

\begin{defn}
Let $f\colon X\rightarrow X$ be a continuous transformation on the metric space $X$. $f$ is said to be
\begin{itemize}
 \item[(i)] \textit{strongly indecomposable} if for any sequence of
$f$-invariant closed subsets $\{A_n\}_{n=1}^\infty$ of $X$ with
$\mathrm{Int}(A_n)\not=\varnothing$,
$\mathrm{Int}(\bigcap_{n=1}^\infty A_n)\not=\varnothing$;

 \item[(ii)] \textit{indecomposable} if for any two $f$-invariant closed subsets
$A,B\subset X$ with $\mathrm{Int}(A)\neq\varnothing$ and
$\mathrm{Int}(B)\neq\varnothing$, $\mathrm{Int}(A\cap
B)\not=\varnothing$;

\item[(iii)] \textit{weakly indecomposable} if there exists a residual
subset $S\subset X$ such that for any two points $x,y\in S$,
$\omega(x)=\omega(y)\neq\varnothing$.
\end{itemize}
\end{defn}

It is easily seen that the following
implication relations hold:
\[
 transitivity\Rightarrow strongly\ \ indecomposability\Rightarrow
 indecomposability.
\]
We will show that indecomposability implies weakly indecomposability
provided that $X$ is a compact space (see Theorem~\ref{thm3.1}
below). And we will give examples in Section~\ref{sec3} to show all
the converses are not true.

\begin{lem}\label{lem2.2}
Let $f\colon X\rightarrow X$ be a continuous transformation on the metric space $X$ such that
$X=\overline{R(f)}$.
Then the following conditions are equivalent:
\begin{enumerate}
\item[$(1)$] $f$ is  transitive.
\item[$(2)$] $f$ is strongly indecomposable.
\item[$(3)$] $f$ is indecomposable.
\end{enumerate}
\end{lem}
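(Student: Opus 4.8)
Since the implications $(1)\Rightarrow(2)\Rightarrow(3)$ hold for an arbitrary continuous self-map (as recorded right after the Definition), the entire content of the lemma is the implication $(3)\Rightarrow(1)$, and this is the only place where the hypothesis $X=\overline{R(f)}$ gets used. I would argue by contradiction, through the characterization of transitivity recalled in the Introduction: if $f$ is indecomposable but not transitive, then there is a closed $f$-invariant set $A$ with $\mathrm{Int}(A)\neq\varnothing$ and $A\neq X$. Put $V:=X\setminus A$, a nonempty open set. The plan is to manufacture a second closed $f$-invariant set $B$ with $\mathrm{Int}(B)\neq\varnothing$ but with $\mathrm{Int}(A\cap B)=\varnothing$, which directly contradicts indecomposability.

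The construction of $B$ is the heart of the matter, and it is exactly here that density of recurrent points is indispensable. A first guess — taking $B$ to be the closure of the forward orbit of $V$ — fails, because $A$ is only forward invariant, so an orbit issuing from $V$ may fall into $A$ and remain there forever, and then that closure would meet $\mathrm{Int}(A)$ in a set with interior. The remedy is to follow only recurrent starting points. Let $D:=R(f)\cap V$; since $\overline{R(f)}=X$ and $V$ is open, $D$ is dense in $V$, in particular nonempty. The key claim is that $\mathrm{Orb}_f(q)\cap A=\varnothing$ for every $q\in D$: if $f^m(q)\in A$ for some $m$, then $f^n(q)\in A$ for all $n\ge m$ by invariance of $A$, while recurrence of $q$ supplies $n_k\uparrow\infty$ with $f^{n_k}(q)\to q\in V$, so $f^{n_k}(q)\in V=X\setminus A$ for all large $k$, contradicting $f^{n_k}(q)\in A$ once $n_k\ge m$. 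Now set $B:=\overline{\bigcup_{q\in D}\mathrm{Orb}_f(q)}$. A union of forward orbits is forward invariant, hence $B$ is closed and $f$-invariant; since $B\supseteq\overline{D}\supseteq V$ we get $\mathrm{Int}(B)\neq\varnothing$; and since $\bigcup_{q\in D}\mathrm{Orb}_f(q)\subseteq X\setminus A\subseteq X\setminus\mathrm{Int}(A)$ and the last set is closed, $B\cap\mathrm{Int}(A)=\varnothing$.

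To finish, $A$ and $B$ are two closed $f$-invariant sets with nonempty interiors, yet $\mathrm{Int}(A\cap B)\subseteq\mathrm{Int}(A)\cap B=\varnothing$; this contradicts indecomposability, so in fact $A=X$ and $f$ is transitive, which closes the cycle $(1)\Rightarrow(2)\Rightarrow(3)\Rightarrow(1)$. The single genuinely delicate step is the one flagged above — recognizing that one must pass to recurrent points so that the auxiliary orbit cannot be absorbed by $A$; the remaining ingredients (closedness and forward invariance of $\overline{\bigcup_{q\in D}\mathrm{Orb}_f(q)}$, density of $D$ in $V$, and the elementary inclusion $\mathrm{Int}(A\cap B)\subseteq\mathrm{Int}(A)\cap B$) are routine.
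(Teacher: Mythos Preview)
Your proof is correct. Both arguments hinge on the same observation --- that a recurrent point whose forward orbit enters a closed invariant set $A$ must itself lie in $A$ --- but they deploy it in opposite directions. The paper argues directly: for an arbitrary nonempty open $V$ it applies indecomposability to the pair $A$ and $V^*=\overline{\bigcup_{n\ge0}f^n(V)}$, obtains a nonempty open set inside $A\cap V^*$, traces this back via continuity to an open $V_1\subseteq V$ with $f^n(V_1)\subseteq A$, and then invokes density of recurrent points in $V_1$ to force $V_1\subseteq A$; since $V$ was arbitrary, $A$ is dense and hence $A=X$. You instead run a contradiction: starting from recurrent points in $V=X\setminus A$ you build a second closed invariant set $B$ whose defining orbits never touch $A$, so that $\mathrm{Int}(A\cap B)=\varnothing$ is exhibited explicitly. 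Your route has the virtue of producing a concrete witness against indecomposability and avoids the trace-back step; the paper's route shows a bit more along the way (that $A$ absorbs an open piece of every open set) and never needs to take a closure of an infinite union of orbits. Either is a clean proof of $(3)\Rightarrow(1)$.
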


\begin{proof}
Obviously,
$\mathrm{(1)}\Rightarrow\mathrm{(2)}\Rightarrow\mathrm{(3)}$. Now we
prove $\mathrm{(3)}\Rightarrow\mathrm{(1)}$. It suffices to show
that for any closed invariant subset $A$ of $X$ with nonempty
interior, we have $A=X$, under the condition $X=\overline{R(f)}$. In
fact, as $f$ is indecomposable, for any nonempty open set $V\subset
X$, we have $\overline{\bigcup_{n\geq 0}f^n(V)}\cap A$ has nonempty
interior. Then there exist nonempty open set $V_1\subset V$ and
$n\in \mathbb{Z}_+$ such that $f^n(V_1)\subset\mathrm{Int}(A)$.
Since the recurrent points of $f$ are dense in $V_1$ and $A$ is an
invariant closed set, we have $V_1\subset A$. By the arbitrariness
of $V$, we get $X\subset A$. Thus, $f$ is transitive.

This proves Lemma~\ref{lem2.2}.
\end{proof}

\begin{lem}\label{lem2.3}
Let $f\colon X\rightarrow X$ be a continuous transformation on a Baire separable metric space $X$ such that
$X=\overline{R(f)}$.
Then the following conditions are equivalent:
\begin{enumerate}
\item[$(1)$] $f$ is  transitive.
\item[$(2)$] $f$ is strongly indecomposable.
\item[$(3)$] $f$ is indecomposable.
\item[$(4)$] $f$ is weakly indecomposable.
\end{enumerate}
\end{lem}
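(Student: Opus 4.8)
The plan is to leverage Lemma~\ref{lem2.2}, which already gives the equivalence of $(1)$, $(2)$, $(3)$ under the hypothesis $X=\overline{R(f)}$ (and in fact needs no Baire or separability assumption). So the only new work is to fold statement $(4)$, weak indecomposability, into the cycle. I would do this by proving $(1)\Rightarrow(4)$ and $(4)\Rightarrow(3)$ (or $(4)\Rightarrow(1)$ directly), using the fact that on a Baire separable metric space $\mathrm{Tr}_f$ is a dense $G_\delta$ set whenever $f$ is transitive — a fact the excerpt has already recorded.

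For $(1)\Rightarrow(4)$: assume $f$ transitive. Then $S:=\mathrm{Tr}_f$ is residual. For every $x\in S$ we have $\overline{\mathrm{Orb}_f(x)}=X$; I would argue that in fact $\omega(x)=X$ for such $x$. Here the hypothesis $X=\overline{R(f)}$ is what makes this work: a transitive point's $\omega$-limit set contains all of $R(f)$ — indeed if $p\in R(f)$ with $f^{n_k}(p)\to p$, then picking times along the transitive orbit that approach $p$ and then following for the recurrence blocks shows $p\in\omega(x)$ — and since $\omega(x)$ is closed and $\overline{R(f)}=X$, we get $\omega(x)=X$. (Alternatively, one shows directly that any transitive point of a system with dense recurrent points has full $\omega$-limit set; this is a standard observation.) Hence $\omega(x)=\omega(y)=X\neq\varnothing$ for all $x,y\in S$, which is $(4)$.

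For the converse direction I would prove $(4)\Rightarrow(1)$. Suppose $S$ is a residual set on which $\omega$ is constant, say $\omega(x)=\Omega\neq\varnothing$ for all $x\in S$. The set $\Omega$ is a closed invariant set; I want to show $\Omega=X$ and that $f|_\Omega$ sweeps everything, i.e.\ $f$ is transitive. Take any nonempty open $U,V\subset X$. Since $S$ is residual hence dense, pick $x\in S\cap U$. Because $\omega(x)=\Omega$, to conclude $f^n(U)\cap V\neq\varnothing$ for some $n$ it suffices to know $\Omega$ meets $V$ — but $\Omega$ need not be all of $X$ a priori, so first I would show $\Omega$ has nonempty interior, in fact $\Omega=X$. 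For this, note $\Omega=\omega(x)\subset\overline{\mathrm{Orb}_f(x)}$ and conversely: the residual set $S$ is dense, every point of $S$ has its orbit clustering on $\Omega$, so $S\subset\bigcap_{N}\overline{\bigcup_{n\ge N}f^{-n}(W)}$ for every open $W$ meeting $\Omega$... Cleaner: I would instead show that $X\setminus\Omega$ has empty interior. If some nonempty open $W\subset X\setminus\Omega$, then $\overline{\bigcup_{n\ge0}f^n(W)}$ is a closed invariant set, and using $X=\overline{R(f)}$ together with the fact that points of $S\cap W$ have $\omega$-limit $\Omega$ disjoint from $W$, one derives that $W$ cannot contain recurrent points, contradicting density of $R(f)$. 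So $\Omega=X$, and then for $x\in S\cap U$ we get $f^n(x)\in V$ for suitable $n$, i.e.\ $f^n(U)\cap V\neq\varnothing$; thus $f$ is transitive.

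The main obstacle I anticipate is the direction $(4)\Rightarrow(1)$: pinning down that the common $\omega$-limit set $\Omega$ is the whole space. Without the hypothesis $X=\overline{R(f)}$ this simply fails (a map could have a globally attracting fixed point, giving weak indecomposability trivially with $\Omega$ a single point while $f$ is far from transitive), so the proof must use dense recurrence in an essential way — precisely to rule out open sets disjoint from $\Omega$. Getting the topological bookkeeping right here (invariance of $\overline{\bigcup f^n(W)}$, the interplay between $\omega$-limit sets along the residual set, and the contradiction with recurrence) is the delicate part; the Baire and separability hypotheses enter only to guarantee $\mathrm{Tr}_f$ is residual in the easy direction and to make "residual $\Rightarrow$ dense" available in the hard one.
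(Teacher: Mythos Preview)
Your overall architecture matches the paper's: invoke Lemma~\ref{lem2.2} for $(1)\Leftrightarrow(2)\Leftrightarrow(3)$ and then close the loop via $(1)\Leftrightarrow(4)$. The direction $(1)\Rightarrow(4)$ is fine and is essentially what the paper does (the paper simply asserts $\omega(x)=X$ for $x\in\mathrm{Tr}_f$; your justification is acceptable, though the ``follow the recurrence blocks'' sketch is not the cleanest route---the standard argument is that under $X=\overline{R(f)}$ no iterate $f^n(x)$ of a transitive point can be isolated, so each tail orbit stays dense).

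The direction $(4)\Rightarrow(1)$ has a genuine gap. From ``points of $S\cap W$ have $\omega$-limit $\Omega$ disjoint from $W$'' you conclude ``$W$ cannot contain recurrent points,'' but this inference only rules out recurrent points lying in $S\cap W$: for a recurrent $z\in W\setminus S$ you have no control over $\omega(z)$ and hence no contradiction. The closed invariant set $\overline{\bigcup_{n\ge0}f^n(W)}$ you introduce is never actually used and does not rescue the step. What is missing is exactly the fact the paper exploits: $R(f)$ is always a $G_\delta$ set (one may write $R(f)=\bigcap_{k\ge1}\bigcup_{n\ge1}\{x:d(f^n(x),x)<1/k\}$), hence under the hypothesis $X=\overline{R(f)}$ it is residual, and therefore $S\cap R(f)$ is residual. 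Every $y\in S\cap R(f)$ then satisfies $y\in\omega(y)=\Omega$, so $S\cap R(f)\subset\Omega$; since $\Omega$ is closed and $S\cap R(f)$ is dense, $\Omega=X$, and any $x\in S$ is a transitive point. This is precisely the paper's argument, and once you supply the missing $G_\delta$ fact your open-set contradiction collapses to it---the detour through a hypothetical $W\subset X\setminus\Omega$ becomes unnecessary.
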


\begin{proof}
According to Lemma~\ref{lem2.2}, we need only prove
$\mathrm{(1)}\Leftrightarrow\mathrm{(4)}$. Since $X$ is a Baire
separable metric space, $\mathrm{Tr}_f$ is a dense $G_\delta$ set.
For any two points $x, y\in\mathrm{Tr}_f$, $\omega(x)=\omega(y)=X$.
Thus $\mathrm{(1)}\Rightarrow\mathrm{(4)}$ holds. Conversely, assume
$f$ is weakly indecomposable. Let $S$ be the residual set such that
for any two points $x,y\in S,\ \omega(x)=\omega(y)$. since $R(f)$ is
a dense $G_\delta$ set, $S\cap R(f)$ is residual. For any points
$x,y\in S\cap R(f)$, $y$ is recurrent and $y \in
\omega(y)=\omega(x)$. As $\omega$-limit set of $x$ is closed, we
have $\omega(x)=X$. Thus $f$ is transitive.

This proves Lemma~\ref{lem2.3}.
\end{proof}

From the statements of Lemmas~\ref{lem2.2} and \ref{lem2.3}, we easily get the following two results.

\begin{thm}\label{thm2.4}
Let $f\colon X\rightarrow X$ be a continuous transformation on the metric space $X$. Then the following conditions are equivalent:
\begin{enumerate}
\item $f$ is Devaney chaotic.
\item $f$ is transitive and has a dense set of periodic points.
\item $f$ is strongly indecomposable and has a dense set of periodic points.
\item $f$ is indecomposable and has a dense set of periodic points.
\end{enumerate}
\end{thm}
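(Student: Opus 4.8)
\textbf{Proof proposal for Theorem~\ref{thm2.4}.} The plan is to reduce everything to Lemma~\ref{lem2.2} together with the classical observation of Banks et al.~\cite{ba}. First I would record the elementary but crucial remark that each of conditions (2), (3) and (4) carries the hypothesis ``$f$ has a dense set of periodic points'', and that this hypothesis alone forces $X=\overline{R(f)}$: every periodic point is recurrent, so $\mathrm{Per}(f)\subset R(f)$, whence density of $\mathrm{Per}(f)$ gives density of $R(f)$. Consequently, for any system satisfying the periodic-point density clause, Lemma~\ref{lem2.2} is applicable and yields at once the equivalence of transitivity, strong indecomposability, and indecomposability. This immediately gives $(2)\Leftrightarrow(3)\Leftrightarrow(4)$.

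It then remains to link these with Devaney chaos, i.e.\ to prove $(1)\Leftrightarrow(2)$. The implication $(1)\Rightarrow(2)$ is trivial, since Devaney's definition explicitly contains transitivity (condition (i)) and density of periodic points (condition (ii)). For the converse $(2)\Rightarrow(1)$ I would invoke the theorem of Banks, Brooks, Cairns, Davis and Stacey~\cite{ba}: on an infinite metric space, transitivity together with density of periodic points implies sensitive dependence on initial conditions, so condition (iii) of Devaney's definition is automatic. (This is exactly the reason we assume $X$ is infinite at the start of Section~\ref{sec2}.) Hence $(2)$ supplies all three of Devaney's conditions, giving $(1)$.

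Putting the two halves together, $(1)\Leftrightarrow(2)$ and $(2)\Leftrightarrow(3)\Leftrightarrow(4)$ close the cycle of equivalences. I do not expect any genuine obstacle here: the substantive work has already been done in Lemma~\ref{lem2.2} (the direction $(3)\Rightarrow(1)$, where the recurrence hypothesis is used to absorb a full forward orbit into an invariant closed set) and in~\cite{ba} (the derivation of sensitivity). The only point requiring a moment's care is the verification that $X=\overline{R(f)}$ holds under the periodic-point density assumption, so that Lemma~\ref{lem2.2} may legitimately be cited; once that is in place the argument is pure bookkeeping. This completes the proof of Theorem~\ref{thm2.4}.
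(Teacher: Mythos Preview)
Your proposal is correct and matches the paper's approach: the paper simply states that Theorem~\ref{thm2.4} follows from Lemma~\ref{lem2.2}, and your argument makes explicit precisely the two points the paper leaves tacit, namely that density of periodic points gives $X=\overline{R(f)}$ (so Lemma~\ref{lem2.2} applies) and that Banks et al.~\cite{ba} supplies sensitivity for the $(2)\Rightarrow(1)$ direction.
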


\begin{thm}\label{thm2.5}
Let $f\colon X\rightarrow X$ be a continuous transformation on a Baire separable metric space $X$. Then the following conditions are equivalent:
\begin{enumerate}
\item $f$ is Devaney chaotic.

\item $f$ is weakly indecomposable and has a dense set of periodic points.
\end{enumerate}
\end{thm}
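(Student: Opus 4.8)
Prove Theorem~\ref{thm2.5}: on a Baire separable metric space, Devaney chaos $\iff$ weak indecomposability $+$ dense periodic points.

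**Plan.** The strategy is to reduce Theorem~\ref{thm2.5} to Lemma~\ref{lem2.3} by first establishing that dense periodic points forces $X=\overline{R(f)}$, so that the hypothesis of Lemma~\ref{lem2.3} is met, and then invoking the equivalence $(1)\Leftrightarrow(4)$ from that lemma together with Theorem~\ref{thm2.4}.

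First I would observe that every periodic point is a recurrent point: if $f^p(x)=x$ then $f^{np}(x)=x\to x$, so $x\in R(f)$. Hence $\mathrm{Per}(f)\subset R(f)$, and if the periodic points are dense then $X=\overline{\mathrm{Per}(f)}\subset\overline{R(f)}\subset X$, giving $X=\overline{R(f)}$. This is the only nontrivial point and it is immediate.

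Next, assume (1), i.e.\ $f$ is Devaney chaotic. Then the periodic points are dense, so by the previous paragraph $X=\overline{R(f)}$; also $f$ is transitive. Since $X$ is a Baire separable metric space satisfying $X=\overline{R(f)}$, Lemma~\ref{lem2.3} applies and gives that transitivity is equivalent to weak indecomposability; in particular $f$ is weakly indecomposable. Combined with density of periodic points, this yields (2).

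Conversely, assume (2), i.e.\ $f$ is weakly indecomposable and the periodic points are dense. Again density of periodic points gives $X=\overline{R(f)}$, so Lemma~\ref{lem2.3} applies and weak indecomposability implies transitivity. Thus $f$ is transitive with a dense set of periodic points, which by Theorem~\ref{thm2.4} (or directly by Theorem~\ref{thm2.4}(2)$\Rightarrow$(1), recalling that transitivity together with dense periodic points implies sensitive dependence on a space with infinitely many points, cf.~\cite{ba}) means $f$ is Devaney chaotic, establishing (1).

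**Main obstacle.** There is essentially no obstacle: the entire content has already been packaged into Lemma~\ref{lem2.3} and Theorem~\ref{thm2.4}. The only thing to verify is that the standing hypothesis $X=\overline{R(f)}$ of Lemma~\ref{lem2.3} is automatically available here, and that follows from the trivial inclusion $\mathrm{Per}(f)\subset R(f)$ once periodic points are assumed dense. So the proof is a two-line deduction, and the write-up should simply state that by the density of periodic points one has $X=\overline{R(f)}$, whence Lemma~\ref{lem2.3} identifies weak indecomposability with transitivity, and Theorem~\ref{thm2.4} finishes.
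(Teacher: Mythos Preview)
Your proposal is correct and is exactly the approach the paper takes: the paper simply states that Theorems~\ref{thm2.4} and~\ref{thm2.5} follow ``easily'' from Lemmas~\ref{lem2.2} and~\ref{lem2.3}, and you have spelled out the one implicit step, namely that density of periodic points gives $X=\overline{R(f)}$ so that Lemma~\ref{lem2.3} applies.
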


Thus Theorem~\ref{thm2.4} implies our Main Theorem stated in Section~\ref{sec1}.

\section{Indecomposability and periodic-points density}\label{sec3}
Let $f\colon X\rightarrow X$ be a continuous transformation on the metric space $X$.
From now on, we let $U^*=\overline{\bigcup_{n\geq 0}f^{n}(U)}$ for any $U\subset X$, which is an invariant closed set of $f$.
Firstly we show that indecomposability implies weakly
indecomposability provided that $X$ is compact.

\begin{thm}\label{thm3.1}
Suppose $X$ is compact. If $f$ is indecomposable, then $f$ is weakly indecomposable.
\end{thm}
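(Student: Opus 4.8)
The plan is to show that on a compact metric space, indecomposability forces the existence of a single $\omega$-limit set shared by a residual set of points. The natural candidate for this common $\omega$-limit set is $Y := \bigcap_{U} U^*$ where the intersection runs over all nonempty open sets $U$ — or, more manageably, over a countable base $\{U_k\}$ of $X$, giving $Y = \bigcap_{k} U_k^*$. Each $U_k^*$ is an $f$-invariant closed set with nonempty interior, so indecomposability guarantees that any finite intersection has nonempty interior; the first task is therefore to promote this to the statement that $Y$ itself is nonempty (compactness gives this for free since the $U_k^*$ are closed with the finite intersection property), and moreover $f$-invariant and closed.

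The heart of the argument is then to produce the residual set $S$ on which $\omega(x) = Y$ for all $x \in S$. I would define, for each $k$, the set $E_k := \{x \in X : \omega(x) \cap U_k \neq \varnothing\}$, i.e. the set of points whose orbit enters $U_k$ infinitely often. The key claim is that each $E_k$ is residual. Openness is not automatic, so I would instead show $E_k$ contains a dense $G_\delta$: write $E_k = \bigcap_{N} \bigcup_{n \geq N} f^{-n}(U_k)$, which is manifestly $G_\delta$, and prove density using indecomposability — specifically, for any nonempty open $V$, the set $V^* \cap U_k^*$ has nonempty interior, which should let one find points in $V$ whose forward orbit returns to $U_k$ infinitely often (here one may need to intersect with the recurrent or transitive structure, or iterate the indecomposability argument as in the proof of Lemma~\ref{lem2.2}). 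Granting this, $S := R(f) \cap \bigcap_k E_k$ is residual (in a Baire space a countable intersection of residual sets is residual, and compact metric spaces are Baire), and for $x \in S$ one gets $\omega(x) \supseteq Y$ because $\omega(x)$ meets every basic open set $U_k$ yet $\omega(x)$ is closed and invariant — wait, more carefully: $\omega(x)$ meeting every $U_k$ makes $\omega(x)$ dense, hence $\omega(x) = X \supseteq Y$ only if that density holds, so instead I expect the correct conclusion is $\omega(x) = Y$ via the two inclusions $Y \subseteq \overline{\mathrm{Orb}_f(x)}$ and $\omega(x) \subseteq Y$, the latter because any $U_k$ disjoint from $\omega(x)$ would have to be disjoint from the tail of the orbit.

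Concretely, the two inclusions I would aim for are: (a) $\omega(x) \subseteq Y$ for \emph{every} $x$ with $\omega(x)$ having nonempty interior, or for $x$ in a residual set, using that $\omega(x)$ is invariant closed so $\omega(x) = \overline{\mathrm{Orb}_f(z)}^{\,*}$-type sets sit inside each $U_k^*$ whenever the orbit enters $U_k$; and (b) $Y \subseteq \omega(x)$ for $x \in S$, which is the substantive direction and is where the residuality of the $E_k$'s is deployed. Combining, $\omega(x) = Y$ for all $x \in S$, and $Y \neq \varnothing$ by compactness, so $f$ is weakly indecomposable by definition.

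The main obstacle I anticipate is establishing that each $E_k$ is dense — i.e., that from any nonempty open set $V$ one can reach points whose orbit returns to $U_k$ infinitely often — using only indecomposability rather than transitivity. The Lemma~\ref{lem2.2} proof shows how indecomposability lets one push an open set into a prescribed invariant set with nonempty interior, but getting \emph{infinitely many} returns (an $\omega$-limit condition rather than a single visit) will likely require an inductive construction of a nested sequence of open sets together with a Baire-category passage to the limit, and care that the resulting point is recurrent (hence the intersection with $R(f)$, which itself should be handled by noting that in the compact setting $\overline{R(f)}$ carries the relevant dynamics, or by observing that $\omega(x) \neq \varnothing$ always and its points are recurrent). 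I would also double-check that $Y$ is independent of the choice of countable base, which follows since $Y = \bigcap \{U^* : U \text{ open}, U \neq \varnothing\}$ is visibly base-independent.
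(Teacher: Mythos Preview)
Your central claim---that each $E_k := \{x : \omega(x) \cap U_k \neq \varnothing\}$ is residual---is false, and in fact would prove too much: if $\omega(x)$ met every basic open set $U_k$ for a residual set of $x$, then $\omega(x)$ would be dense, hence equal to $X$, for those $x$, and $f$ would be transitive. But indecomposability does not imply transitivity (Examples~\ref{ex3.1} and~\ref{ex3.2}). Concretely, in Example~\ref{ex3.2} one has $\omega(x)=\{1\}$ for every $x\neq 0$, so for any basic open $U_k\subset(0,\tfrac12)$ the set $E_k$ is empty. You partly notice this (``wait, more carefully\ldots''), but your step~(b) still relies on the residuality of the $E_k$'s. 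A second, independent problem: intersecting with $R(f)$ is unsafe, since $R(f)$ need not be residual under indecomposability alone---again Example~\ref{ex3.2}, where $R(f)=\{0,1\}$ is nowhere dense.

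The paper's argument avoids both pitfalls. For the inclusion $\omega(x)\subseteq Y$ it uses not your $E_k$ but the sets $\Delta_k=\bigcup_{n\ge 0} f^{-n}\bigl(\mathrm{Int}(\bigcap_{i\le k} U_i^*)\bigr)$, which \emph{are} open and dense (indecomposability, applied to finitely many $U_i^*$ together with an arbitrary $V^*$, forces the orbit of $V$ to enter that interior); once the orbit lands there, invariance gives $\omega(x)\subset\bigcap_{i\le k} U_i^*$. For the other direction the paper restricts to basic sets $U_k$ that actually meet $Y$ and shows only $\overline{\mathrm{Orb}_f(x)}\supseteq Y$ on a residual set---it does \emph{not} try to obtain $\omega(x)\supseteq Y$ directly. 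Bridging the gap from $\omega(x)\subseteq Y\subseteq\overline{\mathrm{Orb}_f(x)}$ to $\omega(x)=Y$ then requires a genuine case split on whether $\mathrm{Int}(Y)=\varnothing$, together with the removal of a further first-category set (preimages of isolated points of $Y$) in the empty-interior case. That case analysis is the missing idea in your outline.
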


\begin{proof}
Let $\mathbb{B}=\{U_i\}_{i=1}^\infty$ be a topology basis of $X$. As
$f$ is indecomposable, for any $k\in \mathbb{Z}_+$, $\bigcap_{i=0}^k
U_{i}^*\neq\varnothing$. It follows from the compactness of $X$ that
$E=\bigcap_{i=0}^\infty U_i^*$ is nonempty, closed and invariant.
Let $\mathbb{B}_E=\{U\in\mathbb{B}|U\cap E\neq\varnothing\}$. Then
for any $U\in \mathbb{B}_E$, $\bigcup_{n=0}^\infty f^{-n}(U)$ is
open and dense in $X$. Thus $V_1=\bigcap_{U\in
\mathbb{B}_E}\bigcup_{n=0}^\infty f^{-n}(U)$ is a dense $G_\delta$
set of $X$. Taking any $x\in V_1$, we have
$\overline{\mathrm{Orb}_f(x)}\supset E$.

For any positive integer $k$, let $L_k=\mathrm{Int}(\bigcap_{i=1}^k
U_i^*)$ and $\Delta_k=\bigcup_{i=0}^\infty f^{-i}(L_k)$. As $f$ is
indecomposable, we have $L_k\neq\varnothing$. Thus $\Delta_k$ is
open and dense in $X$, and $\omega(x)\subset \bigcap_{i=1}^k U_i^* $
for any $x\in \Delta_k$. By Baire's theorem, it follows that
$V_2=\bigcap_{k=1}^\infty\Delta_k$ is a dense $G_\delta$ set.
Therefore for any point $x\in V_2$, $\omega(x)\subset
\bigcap_{i=1}^\infty U_i^*=E. $

We know that $V_1\cap V_2$ is a dense $G_\delta$ set from its
construction. We also have $\omega(x)\subset E\subset
\overline{\mathrm{Orb}_f(x)}$ for any $x\in V_1\cap V_2 $. There are
the following two cases.

Case 1, $\mathrm{Int}(E)\neq\varnothing$. Take
$x\in\mathrm{Int}(E)\cap V_1\cap V_2$ and $y\in V_1\cap V_2$. If $x$
is an isolate point, then $f^n(y)=x$ for some $n$ and
$\omega(x)=\omega(y)$. Otherwise $x$ must be a recurrent point of
$f$, then $E\supset\omega(y)\supset\omega(x)=E$, which implies
$\omega(x)=\omega(y)$. Thus $f$ is weakly indecomposable.

Case 2, $\mathrm{Int}(E)=\varnothing$. Let $E_0=\{e\in E|e$ is
isolate in $E \}$. That is, $E_0$ is the set of all points $e$ in
$E$ with $B_\varepsilon(e)\cap E=e$ for some $\varepsilon>0$, where
$B_\varepsilon(e)$ stands for open ball of center $e$ and radius
$\varepsilon$. $E_0$ must be countable. We claim that $f^{-n}(e)$
has empty interior for each $n>0$ and $ e\in E$. If not, let $n$ be
the smallest positive integer such that $f^{-n}(e)$ has nonempty
interior. When $f^{-n}(e)$ is a singleton, $f^{-n}(e)\in U_i^*$ for
each $i$ since $\overline{\bigcup_{i\geq -n}f^i(e)}$ is an invariant
closed subset with its interior being $f^{-n}(e)$  and $f$ is
indecomposable. Therefore $f^{-n}(e)\in E$ which contradicts
$\mathrm{Int}(E)=\varnothing$. When $f^{-n}(e)$ is not a singleton,
taking two disjoint nonempty open subset $\alpha,\beta\subset
f^{-n}(e)$, we have $\mathrm{Int}(\alpha^*\cap\beta^*)=\varnothing$
which contradicts to the indecomposability of $f$.

Since $f^{-n}(e)$ is a closed set with empty interior for $n>0$ and
$ e\in E$,  we have that $V_3=\bigcup_{e\in E_0}\bigcup_{n=0}^\infty
f^{-n}(e)$  is of first category and so $V_1\cap V_2-V_3$ is a
residual set. Take a point  $x\in V_1\cap V_2-V_3$, if $Orb_f(x)\cap
E=\varnothing$, then $E\subset \overline{Orb_f(x)}-Orb_f(x)\subset
\omega(x)\subset E$. Otherwise  there exists the smallest
nonnegative integer $n$ such that $f^n(x)\in E$. Then we have
\[
   \omega(x)\subset E\subset \overline{Orb_f(x)}-\{x,\ f(x),\cdots,\
   f^{n-1}(x)\}=
   \overline{Orb_{f^n}(x)}.
\]
And $f^n(x)$ must be a cluster point of $E$ because $x\notin V_3$.
Thus $f^n(x)$ is a recurrent point and
\[
 \omega(x)\subset E\subset \overline{Orb_{f^n}(x)}=
 \omega(f^n(x))=\omega(x).
\]
Therefore, in both cases we always have $\omega(x)= E$. This means
that $f$ is weakly indecomposable.

This completes the proof of Theorem~\ref{thm3.1}.
\end{proof}

Secondly, we show that strongly indecomposability is nearly
transitivity.

\begin{thm}\label{thm3.2}
If $X$ is a Baire separable metric space with no isolate point, $f:
X\rightarrow X$ is strongly indecomposable,  then there exist an
invariant closed set $E$ with nonempty interior such that  $f|_E$ is
transitive.
\end{thm}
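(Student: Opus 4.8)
The plan is to build, once and for all, a single explicit invariant closed set $E$ out of the dynamics of a countable basis, and then to verify transitivity of $f|_E$ by producing a transitive (in fact recurrent) point via a Baire‑category argument.

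\emph{Construction of $E$.} Since $X$ is separable, I would fix a countable basis $\{U_i\}_{i\ge 1}$ consisting of nonempty open sets. Each $U_i^{\ast}=\overline{\bigcup_{n\ge 0}f^{n}(U_i)}$ is a closed $f$-invariant set with $\mathrm{Int}(U_i^{\ast})\supseteq U_i\neq\varnothing$. Applying the definition of strong indecomposability to the sequence $\{U_i^{\ast}\}$, the set $E:=\bigcap_{i\ge 1}U_i^{\ast}$ is closed, $f$-invariant, and has $\mathrm{Int}(E)\neq\varnothing$; this is the set I claim works. The one property of $E$ that drives everything is: for every nonempty open $V\subseteq\mathrm{Int}(E)$ one has $V^{\ast}=E$. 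Indeed $V^{\ast}\subseteq E$ because $E$ is closed, invariant and contains $V$, while $E\subseteq U_{i_0}^{\ast}\subseteq V^{\ast}$ for any basis element $U_{i_0}$ with $\varnothing\neq U_{i_0}\subseteq V$.

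\emph{A transitive point for $f|_E$.} Put $W=\mathrm{Int}(E)$, a nonempty open subspace of the Baire space $X$, hence itself a Baire space. For each index $i$ with $U_i\cap E\neq\varnothing$, set $G_i=\bigcup_{n\ge 0}f^{-n}(U_i)$, which is open in $X$. I would check that $G_i\cap W$ is dense in $W$: given a nonempty open $V\subseteq W$, the property above gives $V^{\ast}=E$, and since $U_i$ is open and meets $V^{\ast}=\overline{\bigcup_{n\ge 0}f^{n}(V)}$, it already meets $\bigcup_{n\ge 0}f^{n}(V)$, i.e. $V\cap G_i\neq\varnothing$. As there are only countably many such $i$, Baire's theorem yields a point $x\in W\subseteq E$ lying in all of them; then $\mathrm{Orb}_f(x)$ meets every basis set that meets $E$, which forces $\overline{\mathrm{Orb}_f(x)}=E$ (the inclusion $\subseteq$ being automatic, since $x\in E$ and $E$ is closed and invariant). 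Thus $x$ is a transitive point of $f|_E\colon E\to E$.

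\emph{From a dense orbit to transitivity.} This last step is the only genuinely delicate point, and it is where "no isolated point'' is really used: $E$ need not have dense interior, nor need it be free of isolated points, so a bare dense orbit in $E$ does not by itself make $f|_E$ transitive. The remedy is that our $x$ lies in $W=\mathrm{Int}(E)$ and $X$ has no isolated point, so every neighbourhood of $x$ contained in $W$ contains orbit points distinct from $x$; a short argument then shows $x$ is recurrent, hence so is every $f^{j}(x)$, giving $f^{j}(x)\in\omega(f^{j}(x))\subseteq\overline{\{f^{n}(x):n\ge k\}}$ for all $j,k$, and therefore $\overline{\{f^{n}(x):n\ge k\}}=E$ for every $k\ge 0$. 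Now for nonempty relatively open $P,Q\subseteq E$, I would pick $k$ with $f^{k}(x)\in P$ and then $m>k$ with $f^{m}(x)\in Q$, so that $f^{m-k}(P)\cap Q\neq\varnothing$; hence $f|_E$ is transitive, and $E$ is the required invariant closed set with nonempty interior. I expect the construction of $E$ and the $V^{\ast}=E$ identity to be the conceptual crux, and this final upgrade to be the main technical obstacle.
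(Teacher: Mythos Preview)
Your argument is correct and follows essentially the same route as the paper: construct $E=\bigcap_i U_i^{\ast}$ from a countable basis, use strong indecomposability to get $\mathrm{Int}(E)\neq\varnothing$, show that $\bigcup_{n\ge 0}f^{-n}(U)$ is dense for every basic $U$ meeting $E$, and intersect to obtain a transitive point in $\mathrm{Int}(E)$. The only difference is cosmetic: the paper asserts density of these sets in all of $X$ (which your $V^{\ast}=E$ identity also yields, since any nonempty open $V\subseteq X$ contains some $U_{i_0}$ and hence $V^{\ast}\supseteq E$), and it compresses your careful ``dense orbit $\Rightarrow$ transitive'' step into the single phrase ``since $X$ has no isolated point''---so your write-up is in fact more complete on that last point.
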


\begin{proof}
Let $\mathbb{B}=\{U_i\}_{i=1}^\infty$ be a topology basis of $X$ and
$E=\bigcap_{i=1}^\infty U_i^*$. Then $\mathrm{Int}(E)\neq\varnothing$ by the
strongly indecomposability of $f$. Let
$\mathbb{B}_E=\{U\in\mathbb{B}|U\cap E\neq\varnothing\}$. Then for any
$U\in\mathbb{B}_E$, $\bigcup_{n=0}^\infty f^{-n}(U)$ is open and
dense in $X$, which implies that $\bigcap_{U\in
\mathbb{B}_E}\bigcup_{n=0}^\infty f^{-n}(U)$ is a dense $G_\delta$
set of $X$. Every point $x$ in $\mathrm{Int}(E)\cap\bigcap_{U\in
\mathbb{B}_E}\bigcup_{n=0}^\infty f^{-n}(U)$ is a transitive point
of $f|_E$. Therefore, $f|_E$ is transitive since $X$ has no isolate
point.
\end{proof}

Finally, we consider one-dimensional system $(I, f)$, where $I$ is
an interval and $f$ is a continuous map of $I$. We need a lemma.

\begin{lem}[ \cite{ve}]\label{lem3.3}
Suppose  $f:I\rightarrow I$ is an interval map. If $J\subset I$ is a
subinterval containing no periodic point and $z,f^m(z),f^n(z) \in J$
with $0<m<n$,  then either $z<f^m(z)<f^n(z)$ or $z>f^m(z)>f^n(z)$.
\end{lem}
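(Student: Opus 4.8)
The plan is to argue by contradiction, exploiting the Intermediate Value Theorem together with the hypothesis that $J$ contains no periodic point of $f$. First I would observe that since $J$ is an interval with no periodic point, the continuous function $g(x)=f(x)-x$ never vanishes on $J$; by connectedness of $J$ it has a constant sign there. So either $f(x)>x$ for all $x\in J$, or $f(x)<x$ for all $x\in J$. By symmetry (replacing the order on $I$ if necessary, or simply running the argument for the reversed inequalities) it suffices to treat the case $f(x)>x$ on $J$ and show that then $z<f^m(z)<f^n(z)$.

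The key step is to propagate this pointwise inequality along the orbit segment that stays inside $J$. I would set $z_0=z$, and let $z_0,z_1=f(z_0),\dots$ denote the forward iterates; the hypothesis tells us $z_0,z_m,z_n\in J$. The subtle point is that the \emph{intermediate} iterates $z_1,\dots,z_{m-1}$ and $z_{m+1},\dots,z_{n-1}$ need not lie in $J$, so I cannot simply chain $z_k<f(z_k)=z_{k+1}$. Instead I would use the following observation: if $a,b\in J$ with $a<b$ and $f^j(a),f^j(b)$ both lie in $J$, consider $f^j$ restricted to $[a,b]$; were $f^j(b)\le f^j(a)$, then... — actually the cleanest route is the standard one via fixed points. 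Because $f>\mathrm{id}$ on $J$, for any subinterval $[a,b]\subset J$ one cannot have $f^j([a,b])\supset[a,b]$ with $f^j$ having a fixed point, so the orbit structure is monotone in a suitable sense. Concretely: I claim that whenever $z, f^k(z)\in J$ with $k\ge 1$, then $f^k(z)>z$. Granting this claim, apply it with $k=m$ to get $z<f^m(z)$, and apply it to the point $z'=f^m(z)\in J$ with $k=n-m\ge 1$ (noting $f^{n-m}(z')=f^n(z)\in J$) to get $f^m(z)<f^n(z)$, which is exactly the desired chain.

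To prove the claim $f^k(z)>z$ (for $z,f^k(z)\in J$), I would argue by contradiction: suppose $f^k(z)\le z$. If $f^k(z)=z$ then $z$ is periodic, contradicting the hypothesis on $J$; so suppose $f^k(z)<z$. Now consider the point $z$ and look at the function $h=f^k$. Since $f>\mathrm{id}$ on $J$, and $z\in J$, one has $f(z)>z$; more generally I would track the first time the orbit returns to or below the level $z$. The honest way: let $p=\min\{f^i(z): 0\le i\le k\}$ and $q=\max\{f^i(z):0\le i\le k\}$, so $[p,q]\supseteq\{z,f(z),\dots,f^k(z)\}$ but this interval may not sit inside $J$. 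Here is where the real work lies, and I expect this to be the main obstacle: reconciling "$f>\mathrm{id}$ only on $J$" with an orbit that wanders outside $J$. I would resolve it by instead proving the contrapositive chain \emph{inside} $J$ only: if $z<f^m(z)<f^n(z)$ fails, then some inequality reverses, and one can then locate a fixed point of an appropriate iterate of $f$ between two consecutive iterates of $z$ that both lie in $J$, using the Intermediate Value Theorem applied to $f^j(x)-x$ on a subinterval of $J$ with endpoints among the in-$J$ iterates of $z$; such a fixed point of $f^j$ is a periodic point of $f$ lying in $J$, a contradiction. Finally I would record that the case $f(x)<x$ on $J$ is handled identically, yielding the other alternative $z>f^m(z)>f^n(z)$, and that the two alternatives are exhaustive because the sign of $f-\mathrm{id}$ on $J$ is the only dichotomy available. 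This completes the proof of Lemma~\ref{lem3.3}.
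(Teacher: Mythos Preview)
The paper does not supply its own proof of Lemma~\ref{lem3.3}; the lemma is quoted from Vellekoop and Berglund~\cite{ve} and used as a black box in the proof of Theorem~\ref{thm3.4}. So there is no ``paper's proof'' to compare your attempt against.

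On its own terms your proposal is incomplete, and you say so yourself (``Here is where the real work lies, and I expect this to be the main obstacle''). Two concrete issues. First, the opening reduction to the sign of $f-\mathrm{id}$ on $J$ is a detour: the useful fact is that for \emph{every} $k\ge 1$ the function $f^k-\mathrm{id}$ has constant sign on $J$ (any zero would be a periodic point in $J$), not just for $k=1$. What you actually need are the signs of $f^m-\mathrm{id}$ and of $f^{n-m}-\mathrm{id}$, read off at the in-$J$ iterates $z$, $f^m(z)$, $f^n(z)$; the sign of $f-\mathrm{id}$ alone does not determine these, precisely because intermediate iterates may leave $J$, and your attempt to propagate the $k=1$ sign to general $k$ never closes. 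Second, your proposed resolution---``apply the Intermediate Value Theorem to $f^j(x)-x$ on a subinterval of $J$ with endpoints among the in-$J$ iterates of $z$''---cannot work as stated: since $f^j-\mathrm{id}$ has \emph{constant} sign on all of $J$ for each fixed $j$, it never takes opposite signs at two points of $J$, so the IVT yields nothing. You have not specified which $j$ and which pair of endpoints would exhibit a sign change, and for any single $j$ no such pair exists. The contradiction in \cite{ve} is reached by a more careful combination of these constant-sign facts than your sketch provides.
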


\begin{thm}\label{thm3.4}
An interval map $f:I\rightarrow I$ is strongly indecomposable, then
there exist a positive integer $n$ and disjoint closed non
degenerate subintervals $J_0,J_1,\dots,J_{n-1}, J_n=J_0$ such that
$f(J_i)=J_{i+1},~i=0,\dots,n-1$ and $f$ is Devaney chaotic
on $\bigcup\limits_{i=0}\limits^{n-1} J_i$. Furthermore, $f^n$ is
Devaney chaotic on $J_i,~i=0,\dots,n-1$.
\end{thm}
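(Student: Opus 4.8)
The plan is to exploit Theorem~\ref{thm3.2}: since an interval $I$ (viewed as a Baire separable metric space, after discarding isolated endpoints if any) has no isolated points, strong indecomposability of $f$ produces an $f$-invariant closed set $E$ with $\mathrm{Int}(E)\neq\varnothing$ on which $f|_E$ is transitive. By classical one-dimensional dynamics, a transitive map on a closed set with nonempty interior in $\mathbb{R}$ forces $E$ to contain a nondegenerate interval structure: more precisely, I would first argue that $\mathrm{Int}(E)$, being an open subset of $I$, is a countable union of open intervals, and transitivity of $f|_E$ permutes these components in a way that will be shown to be a finite cyclic permutation. Concretely, pick a component $(a,b)$ of $\mathrm{Int}(E)$; its forward orbit under $f$ is dense in $E$, and since each $f^k((a,b))$ is connected it lies in a single component of $\mathrm{Int}(E)$ (or meets $\partial E$), so one obtains a finite cycle of components $W_0,W_1,\dots,W_{n-1}$ with $f(\overline{W_i})\supset \overline{W_{i+1}}$ after passing to closures; setting $J_i=\overline{W_i}$ (shrinking slightly if needed so that $f(J_i)=J_{i+1}$ exactly, using the intermediate value theorem to adjust endpoints) gives the desired disjoint closed nondegenerate subintervals with $f(J_i)=J_{i+1}$ and $J_n=J_0$.

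Next I would show $f^n$ is transitive on each $J_i$. Since $f|_E$ is transitive and the $J_i$ exhaust $E$ up to a nowhere dense set, a transitive point $x\in E$ for $f$ has orbit hitting every $J_i$; restricting attention to the subsequence landing in $J_0$, the $f^n$-orbit of $f^{m}(x)$ (for appropriate $m$) is dense in $J_0$, so $f^n|_{J_0}$ is transitive, and conjugating by $f^i$ gives transitivity of $f^n|_{J_i}$. The remaining ingredient is the density of periodic points. Here Lemma~\ref{lem3.3} is the essential tool: I would argue by contradiction that if periodic points of $f^n$ were not dense in $J_0$, there would be a subinterval $J\subset J_0$ containing no periodic point of $f^n$; transitivity of $f^n|_{J_0}$ yields a point $z\in J$ with $f^{nm}(z),f^{nk}(z)\in J$ for some $0<m<k$, and Lemma~\ref{lem3.3} (applied to the map $f^n$) forces the values $z,f^{nm}(z),f^{nk}(z)$ to be strictly monotone; iterating transitivity to return arbitrarily close to $z$ from the other side then contradicts this monotonicity, forcing a periodic point into $J$. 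Thus periodic points of $f^n$ are dense in each $J_i$, hence periodic points of $f$ are dense in $\bigcup_{i=0}^{n-1}J_i$. Combining transitivity and density of periodic points, Theorem~\ref{thm2.4} (equivalence of (2) and (1)) gives that $f^n$ is Devaney chaotic on each $J_i$ and $f$ is Devaney chaotic on $\bigcup_{i=0}^{n-1}J_i$.

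The main obstacle I anticipate is the bookkeeping in the first paragraph: showing that the forward orbit of a single component of $\mathrm{Int}(E)$ visits only finitely many components (so the cycle is genuinely finite) and that after closing up one can adjust endpoints so that $f(J_i)=J_{i+1}$ holds with equality rather than mere inclusion — this requires a careful argument that the "overhang" of $f(J_i)$ beyond $J_{i+1}$ can be trimmed without destroying transitivity, typically by replacing $J_i$ with the largest subinterval whose image chain stays within the cycle and invoking the intermediate value theorem to see the trimmed intervals are still nondegenerate. A secondary delicate point is verifying that the set $E$ from Theorem~\ref{thm3.2} has interior meeting only finitely many "transitive components"; if $\mathrm{Int}(E)$ had infinitely many components one would need an extra compactness or finiteness argument, which is where the interval structure (total order, Lemma~\ref{lem3.3}-type constraints on how orbits wind) does the real work.
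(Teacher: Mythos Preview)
Your overall strategy matches the paper's: invoke Theorem~\ref{thm3.2} to obtain a transitive restriction $f|_E$, use Lemma~\ref{lem3.3} to force density of periodic points in any subinterval of $E$, and decompose $E$ into a finite cycle of closed intervals. Your application of Lemma~\ref{lem3.3} is essentially the paper's (the paper phrases it as: a transitive point $x\in(a,b)$ together with returns $a<f^q(x)<x<f^p(x)<b$ directly violates the lemma). One cosmetic difference: the paper proves periodic-point density in $E$ \emph{before} building the cycle, whereas you postpone it and apply Lemma~\ref{lem3.3} to $f^n$; either order works.

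The genuine gap is exactly where you flag it: the construction of the cycle $J_0,\dots,J_{n-1}$. Your proposed fixes (an unspecified compactness/finiteness argument for the number of components, and ``trimming'' endpoints via the intermediate value theorem to force $f(J_i)=J_{i+1}$) are not arguments as they stand, and trimming could a priori destroy transitivity. The paper sidesteps both issues with a single device: take $J_0$ to be the \emph{longest} subinterval of $E$. If $n$ is the least positive integer with $f^n(J_0)\cap J_0\neq\varnothing$, then $f^n(J_0)\cup J_0$ is a connected subset of $E$ containing $J_0$, so maximality of $J_0$ gives $f^n(J_0)\subset J_0$, and transitivity of $f|_E$ upgrades this to equality. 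Setting $J_i=f^i(J_0)$ then yields the cycle directly; disjointness follows from minimality of $n$, closedness from compactness of $J_0$, and $E=\bigcup_{i=0}^{n-1}J_i$ because this union is closed, $f$-invariant, and has nonempty interior in the transitive system $(E,f|_E)$. This ``longest interval'' trick is the missing idea in your sketch.
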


\begin{proof}
Assume that $f$ is strongly indecomposable. By Theorem \ref{thm3.2},
there exists a closed subset $E$ which contains a non degenerate
interval $J$ such that $f|_E$ is transitive. Let $(a,b)$ be any non
degenerate subinterval of $J$. Suppose $(a,b)$ contains  no periodic
point. By the transitivity of $f|_E$, there exist a transitive point
$x\in (a,b)$ and $0<p<q$ such that $a<f^q(x)<x<f^p(x)<b$, which
contradicts with Lemma \ref{lem3.3}. Thus the periodic points are
dense in $J$. Since $f|_E$ is transitive, the periodic points are
dense in $E$, $f|_E$ is Devaney chaotic.

Let $J_0$ be the longest subinterval of $E$. It must be closed
because $E$ is closed. Since $f|_E$ is  transitive, there exists the
smallest positive integer $n$ such that $f^n(J_0)\cap J_0\neq
\varnothing$. We have $f^n(J_0)\subset J_0$ as $J_0$ is the longest
subinterval. The transitivity of $f|_E$ ensures $f^n(J_0)= J_0$. Let
$J_i=f^{i}(J_0),~i=0,\dots,n-1$. We claim that $J_i,\ i=0,\dots,n-1$
are disjoint. If not, there exist integers $0\leq l<m\leq n-1$ such
that $J_l\cap J_m\neq \varnothing$, which follows $J_0\cap
J_{m-l}\supset f^{n-l}(J_l\cap J_m)\neq\varnothing$. A
contradiction.

Since $J_0$ is closed and $f^{n-i}(J_i)=J_0$, $J_i$ is closed for
$i=0,\dots,n-1$. It follows that
$\bigcup\limits_{i=0}\limits^{n-1}J_i$ is invariant and closed. So
$E=\bigcup\limits_{i=0}\limits^{n-1}J_i$.  It's not difficult to
check that $f^n|_{J_i}$ is
 transitive and  chaos in the sense of Devaney.
\end{proof}

At the end of this note, we give two examples to illustrate that
\[
  \textrm{indecomposability} \nRightarrow \textrm{strongly indecompossability}\nRightarrow \textrm{transitivity}.
\]

\begin{exm}\label{ex3.1}
Let $I=[0,1]$ and $f$ be defined as
\begin{equation*}
f(x)= \begin{cases}
-2x+1,& x\in [0,\frac{1}{6}],\\
2x+1/3,& x\in [\frac{1}{6},\frac{1}{3}],\\
-3x+2,& x\in [\frac{1}{3},\frac{2}{3}],\\
x-2/3,& x\in [\frac{2}{3},1].
\end{cases}
\end{equation*}
See Figure \ref{fig3}. Then $f$ is strongly indecomposable but not
transitive.
\end{exm}

\begin{figure}[h]
\centerline{\epsfig{figure=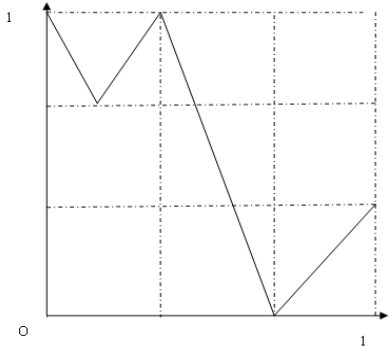,width=6cm,height=5cm}} \caption{The
profile of $f$ in Example~\ref{ex3.1}} \label{fig3}
\end{figure}

\begin{proof}
We show that $f$ is strongly indecomposable.
On interval $[0,\frac{1}{3}]$, $f^2$ can be expressed as
\begin{equation*}
f^2(x)=\begin{cases}
-2x+\frac{1}{3},& x\in [0,\frac{1}{6}],\\
2x-\frac{1}{3},& x\in [\frac{1}{6},\frac{1}{3}].
\end{cases}
\end{equation*}
It is clear that $f^2|_{[0,\frac{1}{3}]}$ is mixing. For any non
degenerate subinterval $J\subset [0,1]$, there exists an integer $
n\geq 0$ such that $f^n(J)\cap (0,\frac{1}{3})\neq\varnothing$.
Since $f^n(J)$ is non degenerate, $\overline{\bigcup_{n\geq
0}f^n(J)}\supset [0,\frac{1}{3}]$, $f$ is strongly indecomposable on
$[0,1]$. On the other hand, for $x\in (\frac{3}{9},\frac{4}{9})$,
$f^n(x)$ never comes back into $(\frac{1}{3},\frac{2}{3})$ any more
for $n>0$. Thus $f$ is not transitive.
\end{proof}

\begin{figure}[h]
\centerline{\epsfig{figure=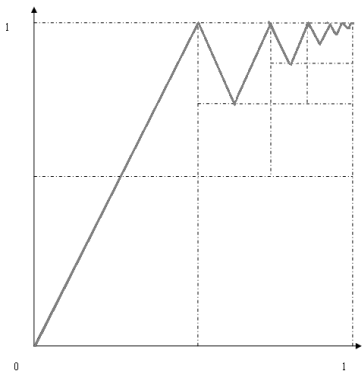,width=5cm,height=5cm}} \caption{The
profile of $f$ on [0,1] in Example~\ref{ex3.2}.} \label{fig1}
\end{figure}

\begin{exm}\label{ex3.2}
Let $I=[0,1]$ and $f: I\rightarrow I$ be defined as
\begin{gather*}
f(0)=0; f(1)=1;\\
f(1-\frac{1}{2^n})=1,~~n=1,2,\dotsc,\\
f(1-\frac{3}{2^{n+2}})=1-\frac{1}{2^{n+1}},~~n=1,2,\dotsc.
\end{gather*}
$f$ is linear between $1-\frac{1}{2^n}$ and $1-\frac{3}{2^{n+2}}$,
$n=1,2,\dotsc$. See Figure~\ref{fig1}. Then $f$ is indecomposable
but not strongly indecomposable. Furthermore, $f$ has only two
periodic points $0$ and $1$.
\end{exm}

\begin{proof}
To show that $f$ is indecomposable, let
 $A,B\subset X$ be two invariant closed subsets with non degenerate
intervals $I\subset A, J\subset B$, respectively. If $I$ covers at
least 2 critical points, then $f(I)\supset [1-\frac{|I|}{4},1]$.
Here $|I|$ denotes the length of the interval $I$. If $I$ covers
less than two critical points, we have $|f(I)|\geq |I|$ by the fact
that the absolute value of slope of $f$ is 2 everywhere except the
critical points. Thus there exists the smallest positive integer $n$
such that $f^n(I)$ covers at least two critical points. We have
$f^{n+1}(I)\supset [1-\frac{|I|}{4},1]$. Hence $A\supset
[1-\frac{|I|}{4},1] $. Similarly, we have $B\supset
[1-\frac{|J|}{4},1] $. Therefore, $A\cap B$ contains a non
degenerate interval and $f$ is indecomposable.

But  $f$ is not strongly indecomposable. In fact, let
$J_n=[1-\frac{1}{2^n},1],n=1,2,\dots$. Then $J_n$ is invariant with
nonempty interiors and $\bigcap\limits_n J_n=\{1\}$ which contains
no interior point.

It is easily  seen that the  periodic points of $f$ are
 $\{0,1\}$ and the point $\{1\}$ attracts all the points except the origin. $f$ is far
from chaos.
\end{proof}

\begin{rem}
For interval maps $f: I\rightarrow I$, strongly indecomposability
does not imply periodic-points density on $I$, but it ensures
Devaney's chaos on some subintervals of $I$.
\end{rem}

\begin{rem}
Even for interval maps, indecomposability does not imply
periodic-points density. Example~\ref{ex3.2} demonstrates that  an
indecomposable interval map can be far from chaos.
\end{rem}

\begin{rem}
Weakly indecomposability is the weakest concept among the three ones
on compact space. Such system has a topologically ``large" set of
points, each of which has the same $\omega$-limit set. An
indecomposable map can have very simple dynamics. For example, any
constant map on a metric space $X$ (a map which maps all points of
$X$ into a common fixed point) is indecomposable.
\end{rem}

\section*{Acknowledgment}
The authors would like to thank Professor Xiongping Dai for some
valuable discussion.

\end{document}